\newtheorem{theorem}{Theorem}[section]
\newtheorem{conjecture}[theorem]{Conjecture}
\theoremstyle{definition}
\DeclareMathOperator{\rank}{rank}
\DeclareMathOperator{\qdim}{qdim}
\DeclareMathOperator{\qrank}{qrank}
\newcommand{\Reals}{\mathbb{R}}
\DeclareMathOperator{\Tait}{Tait}
\DeclareMathOperator{\F}{\mathbb{F}}
\begin{document}

\title{The combinatorial and gauge-theoretic foam evaluation functors
  are not the same}

\author{David Boozer} 

\begin{abstract}
Kronheimer and Mrowka used gauge theory to define a functor $J^\sharp$
from a category of webs in $\Reals^3$ to the
category of finite-dimensional vector spaces over the field of two
elements.
They also suggested a possible combinatorial replacement $J^\flat$
for $J^\sharp$, which Khovanov and Robert proved is well-defined on a
subcategory of planar webs.
We exhibit a counterexample that shows the restriction of the functor
$J^\sharp$ to the subcategory of planar webs is not the same as
$J^\flat$.
\end{abstract}

\date{\today}

\maketitle

\section{Introduction}

Kronheimer and Mrowka have outlined a new approach that could
potentially lead to the first non-computer based proof of the
four-color theorem \cite{KM-foams}.
Their approach relies on a functor $J^\sharp$, which they define using
gauge theory, from a category of webs in $\Reals^3$ and foams in
$\Reals^4$ to the category of finite-dimensional vector spaces over
the field of two elements $\F$.
A \emph{web} is an unoriented trivalent graph and a \emph{foam} is a
kind of singular cobordism between webs whose precise form is
described in \cite{KM-foams}.

The four-color theorem can be reformulated as a statement about webs.
An edge $e$ of a web is said to be a \emph{bridge} if removing $e$
increases the number of connected components of the web.
A \emph{Tait coloring} of a web is a 3-coloring of the edges of
the web such that no two edges incident on any given vertex share the
same coloring.
Given a web $K$, the \emph{Tait number $\Tait(K)$} is the number of
Tait colorings of $K$.
The four-color theorem is equivalent to the statement that every
bridgeless planar web admits at least one Tait coloring.

The functor $J^\sharp$ associates a vector space $J^\sharp(K)$ to a
web $K$ in $\Reals^3$.
An edge $e$ of a web $K$ in $\Reals^3$ is said to be an
\emph{embedded bridge} if there is a 2-sphere smoothly embedded in
$\Reals^3$ that transversely intersects $K$ in a single point that
lies on $e$.
Kronheimer and Mrowka prove the following nonvanishing theorem:

\begin{theorem}(Kronheimer--Mrowka \cite[Theorem 1.1]{KM-foams})
\label{theorem:nonvanishing}
For a web $K$ in $\Reals^3$, the vector space $J^\sharp(K)$ is zero
if and only if $K$ has an embedded bridge.
\end{theorem}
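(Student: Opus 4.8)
The plan is to prove the two implications by quite different methods, and I expect the nonvanishing (``only if'') direction to be the real difficulty. I would dispose of the easy direction first: an embedded bridge forces $J^\sharp(K)=0$. By definition an embedded bridge supplies a smoothly embedded $2$-sphere $S\subset\Reals^3$ meeting $K$ transversally in a single point $p$ lying on an edge $e$. Recall that $J^\sharp(K)$ is built from a chain complex whose generators are gauge-equivalence classes of flat singular $SO(3)$ connections on the complement of $K$, the singular data forcing the holonomy around each meridian of $K$ to be an element of order exactly $2$. Now a small meridian $\mu_p$ of $e$ at $p$ bounds the complementary disk $D'=S\setminus(\text{open disk around }p)$, and since $S\cap K=\{p\}$ the interior of $D'$ is disjoint from $K$; hence $\mu_p$ is null-homotopic in the complement of $K$. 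Any flat connection must therefore send $\mu_p$ to the identity, contradicting the order-$2$ constraint. The moduli space of flat singular connections is thus empty, the chain complex has no generators, and $J^\sharp(K)=0$.

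For the converse I would aim to establish the lower bound $\dim_\F J^\sharp(K)\geq 1$ whenever $K$ has no embedded bridge. The tools I would use are the unoriented skein exact triangle for $J^\sharp$ (the instanton triangle relating the three local replacements of $K$ along an edge), the excision theorem, and the behaviour of $J^\sharp$ under disjoint union and connected sum. First I would use the connected-sum and disjoint-union formulas to reduce to a connected, bridgeless web. Then I would induct on a complexity measure --- say the number of trivalent vertices, or crossing data read off a planar diagram --- using the skein triangle to relate $J^\sharp(K)$ to webs of strictly smaller complexity, with base cases the unknot, the theta-web, and the tetrahedral web, for which $J^\sharp\neq 0$ can be checked directly by exhibiting the Klein-four (Tait) representations that exist precisely because these webs are bridgeless.

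The main obstacle is that an exact triangle only yields $\dim_\F J^\sharp(K)\geq |\dim_\F J^\sharp(K')-\dim_\F J^\sharp(K'')|$: the connecting homomorphism can cancel the contributions of the two resolutions, so nonvanishing of the simpler pieces need not propagate upward. To defeat this cancellation I see two strategies. The first is to find a grading (a mod-$2$ or $\Ints/4$ grading on $J^\sharp$) in which the generators produced by the induction lie in controlled degrees, so that an Euler-characteristic count gives an honest lower bound. The more promising strategy is to construct a spectral sequence from a combinatorial model --- a Tait- or Khovanov-type complex --- converging to $J^\sharp(K)$, whose first page is manifestly nonzero for a bridgeless web; bridgelessness is exactly the hypothesis one expects to guarantee that the bottom of this combinatorial complex survives. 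Alternatively, one could try to use excision to compare $J^\sharp(K)$ directly with the instanton homology of a closed configuration to which Floer's nonvanishing theorem applies; the delicate point there is to arrange an excision that both retains the information distinguishing $K$ and lands in a setting where nonvanishing is already known. In all of these approaches, ruling out total cancellation of the Floer differential --- rather than merely counting critical points --- is the crux.
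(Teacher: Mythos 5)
This statement is quoted from Kronheimer--Mrowka and the paper under review gives no proof of it, so the only question is whether your proposal would stand on its own. Your argument for the ``if'' direction is essentially the standard one and is fine modulo routine care: the sphere minus a small disk about $p$ exhibits the meridian of the bridge edge as null-homotopic in the complement of $K$ (and of the auxiliary $\sharp$-data, which can be isotoped off the sphere), so no flat bifold connection satisfies the order-two meridional constraint; the unperturbed critical set is empty, hence so is the perturbed one for small perturbations, and $J^\sharp(K)=0$.

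The ``only if'' direction, however, is not a proof but a plan whose central difficulty you correctly name and then do not resolve, and two of your proposed routes are provably blocked. First, the theorem concerns \emph{embedded} bridges for webs in $\Reals^3$: a web can have an abstract bridge that is not an embedded bridge (the two sides may be linked), and such a web must have $J^\sharp\neq 0$; your opening reduction ``to a connected, bridgeless web'' via connected-sum and disjoint-union formulas cannot reach these cases. Second, any strategy whose lower bound ultimately comes from Tait colorings or from a Tait/combinatorial first page of a spectral sequence fails outright: an embedded Petersen graph is bridgeless but admits no Tait colorings, so the proposed first page would be zero while the theorem asserts $J^\sharp\neq 0$. (Relatedly, your remark that Klein-four representations ``exist precisely because these webs are bridgeless'' conflates bridgelessness with $3$-edge-colorability, which is false in general.) The cancellation problem in the skein triangle is exactly where Kronheimer and Mrowka do the real work: their argument runs the exact triangles through an octahedral diagram involving two additional non-planar webs, which forces the homology at diametrically opposite corners of the $4$-periodic complex to agree --- this is precisely the mechanism quoted as Theorem \ref{theorem:corners} in the present paper --- and none of your three suggested fixes supplies a substitute for it.
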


Based on some simple examples and general properties of $J^\sharp$,
they make the following conjecture, which by
Theorem \ref{theorem:nonvanishing} implies the four-color theorem:

\begin{conjecture}(Kronheimer--Mrowka \cite[Conjecture 1.2]{KM-foams})
\label{conj:tait}
For a web $K$ that lies in the plane $\Reals^2 \subset \Reals^3$, we
have $\dim J^\sharp(K) = \Tait(K)$.
\end{conjecture}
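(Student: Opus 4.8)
The plan is to bound the gauge-theoretic dimension $\dim J^\sharp(K)$ above and below by $\Tait(K)$, exploiting the description of $J^\sharp(K)$ as the homology of an instanton complex whose generators are flat connections. The first step is to identify these generators: $J^\sharp(K)$ is computed from a chain complex generated by gauge-equivalence classes of flat $SO(3)$ connections --- critical points of a Chern--Simons functional --- on an orbifold built from $\Reals^3$ with singular data along the edges of $K$. For a \emph{planar} web I expect these critical points to be in natural bijection with the Tait colorings of $K$: the prescribed $\pi$-rotation holonomies around three edges meeting at a trivalent vertex must have mutually orthogonal axes, and the three axes record exactly a $3$-coloring compatible at the vertex. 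The key technical claim here is that planarity forces this representation variety to be zero-dimensional and nondegenerate, so that the generators are finite in number and counted precisely by $\Tait(K)$. Granting this, one immediately obtains the upper bound $\dim J^\sharp(K) \le \Tait(K)$.

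To promote the upper bound to the desired equality I would show that the instanton differential vanishes, so that no generators cancel in homology. Because the coefficient field $\F$ has characteristic two, one route is to produce a grading --- an energy or relative $\Ints$-grading on the generators --- in which all flat connections coming from Tait colorings lie in a single parity class, forcing $\partial = 0$ for degree reasons; an alternative is to exhibit enough symmetry among the trajectory moduli spaces to make their mod-$2$ counts cancel in pairs. As a partial lower bound one can feed in Theorem \ref{theorem:nonvanishing}, which already guarantees $J^\sharp(K) \ne 0$ whenever $K$ has no embedded bridge, pinning down the count in the simplest cases and anchoring an induction on the number of vertices via Kronheimer--Mrowka's skein exact triangles together with the matching combinatorial relations satisfied by $\Tait$ (multiplicativity under disjoint union, bigon and theta reductions, and the value on an unknotted circle).

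The hard part will be the vanishing of the differential. The skein exact triangles by themselves yield only inequalities between dimensions, since their connecting homomorphisms may be nonzero; the entire content of the conjecture is that these maps vanish for planar webs, equivalently that the associated cube-of-resolutions spectral sequence degenerates at its first page. This is precisely the property that the combinatorial candidate $J^\flat$ enjoys by construction, so it is tempting to deduce degeneration for $J^\sharp$ by comparison. That shortcut is unavailable: the present paper exhibits a planar web on which $J^\sharp$ and $J^\flat$ disagree, so the gauge-theoretic differentials must be analyzed directly, with no combinatorial proxy to stand in for them. It is this direct control of the instanton moduli spaces --- and the possibility that degeneration genuinely fails --- that keeps the statement a conjecture rather than a theorem.
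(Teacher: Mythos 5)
The statement you are addressing is Conjecture \ref{conj:tait}, which the paper does not prove and does not claim to prove: together with Theorem \ref{theorem:nonvanishing} it implies the four-color theorem, and it is quoted as an open conjecture of Kronheimer and Mrowka. So there is no proof in the paper to compare against, and your closing acknowledgment that the statement remains a conjecture is the correct conclusion. What can be reviewed is whether your sketch correctly locates the difficulty, and here there is a concrete gap to name. Your ``key technical claim'' --- that for a planar web the critical set of the (perturbed) Chern--Simons functional is cut out transversally and consists of exactly $\Tait(K)$ points, giving $\dim J^\sharp(K) \le \Tait(K)$ by counting generators --- is not established anywhere and is essentially the entire content of the conjecture in disguise. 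The representation variety of flat connections with the prescribed holonomy data need not be zero-dimensional or nondegenerate, and the holonomy perturbations required to achieve transversality can introduce additional generators, so the generator count is not known to equal $\Tait(K)$.

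Your logic is also partly inverted relative to what is actually known. The established inequality, proved in \cite{KM-deformation} via local coefficients and recorded in equation (\ref{eqn:flat-tait-sharp}), is the \emph{lower} bound $\Tait(K) \le \dim J^\sharp(K)$; the open direction is the upper bound. If you could really prove your generator count, you would be finished immediately: the homology of a complex with $\Tait(K)$ generators has dimension at most $\Tait(K)$, and the known lower bound would then force equality and the vanishing of the differential. Your entire second paragraph about parity gradings and cancellation in the instanton differential would be redundant, which is a sign that the weight of the argument has been placed on the wrong step. Two things in your proposal are exactly right and worth keeping: the observation that the skein exact triangles alone yield only inequalities, and the observation that the present paper's counterexample ($J^\sharp \ne J^\flat$ on planar webs, via Theorems \ref{theorem:corners} and \ref{theorem:homology}) closes off the tempting route of transporting the degeneration enjoyed by $J^\flat$ over to $J^\sharp$.
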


Kronheimer and Mrowka also suggested a possible combinatorial
replacement $J^\flat$ for $J^\sharp$, which they defined via a set of
rules that they conjectured would yield a well-defined functor
\cite[Section 8.3]{KM-foams}.
Khovanov and Robert later showed that $J^\flat$ is not well-defined
for arbitrary webs in $\Reals^3$ and foams in $\Reals^4$, but is well
defined provided we restrict to the subcategory of webs in
$\Reals^2$ and foams in $\Reals^3$ \cite{Khovanov}.
We will call this subcategory the category of \emph{planar} webs.
We note that planar webs are precisely those relevant to Conjecture
\ref{conj:tait}.
Based on results due to Khovanov and Robert \cite{Khovanov}, and
Kronheimer and Mrowka \cite{KM-deformation}, for any planar web $K$ we
have
\begin{align}
  \label{eqn:flat-tait-sharp}
  \dim J^\flat(K) \leq \Tait(K) \leq \dim J^\sharp(K),
\end{align}
and for a special class of \emph{reducible} planar webs (also called
\emph{simple} webs in \cite{KM-foams}), these three integers coincide:
\begin{align*}
  \dim J^\flat(K) = \Tait(K) = \dim J^\sharp(K).
\end{align*}
A proof that the restriction of $J^\sharp$ to the subcategory of
planar webs is indeed the same functor as $J^\flat$ would therefore
prove Conjecture \ref{conj:tait} and hence the four-color theorem.

It is thus of interest to understand the relationship between the
functors $J^\flat$ and $J^\sharp$.
Some insight into these functors can be gained by considering related
functors with different target categories.
In \cite{KM-deformation}, Kronheimer and Mrowka establish the second
inequality in equation (\ref{eqn:flat-tait-sharp}) by introducing a
system of local coefficients and defining a functor from the category
of webs in $\Reals^3$ to the category of modules over the ring
$\F[T_1^{\pm 1}, T_2^{\pm 1}, T_3^{\pm 1}]$.
In \cite{Khovanov}, Khovanov and Robert extend the ground field $\F$
to the graded ring $R = \F[E_1, E_2, E_3]$, where
\begin{align*}
  \deg(E_1) &= 2, &
  \deg(E_2) &= 4, &
  \deg(E_3) &= 6,
\end{align*}
and define a functor from the category of planar webs to the category
of modules over $R$.
One can obtain additional functors by base-changing to
a ring $S$ via a ring homomorphism $R \rightarrow S$.
As noted in \cite{Boozer} Corollary 3.1, the first inequality in
equation (\ref{eqn:flat-tait-sharp}) directly follows
from \cite{Khovanov} Proposition 4.18 by considering such
base-changes.

We consider here a base-change from $R$ to the graded ring $\F[E]$,
where $\deg(E) = 6$, via the ring homomorphism $R \rightarrow \F[E]$
given by
\begin{align*}
  E_1, E_2 \mapsto 0, &&
  E_3 \mapsto E.
\end{align*}
We thereby obtain a functor from the category of planar webs to the
category of modules over $\F[E]$.
We denote this functor by $\langle - \rangle$.
Given a web $K$, we say that the corresponding $\F[E]$-module
$\langle K \rangle$ is the \emph{state space} of $K$.
By \cite{Khovanov} Proposition 4.18, the state space
$\langle K \rangle$ is a free graded module of rank $\Tait(K)$.

\begin{figure}
  \centering
  \includegraphics[scale=0.5]{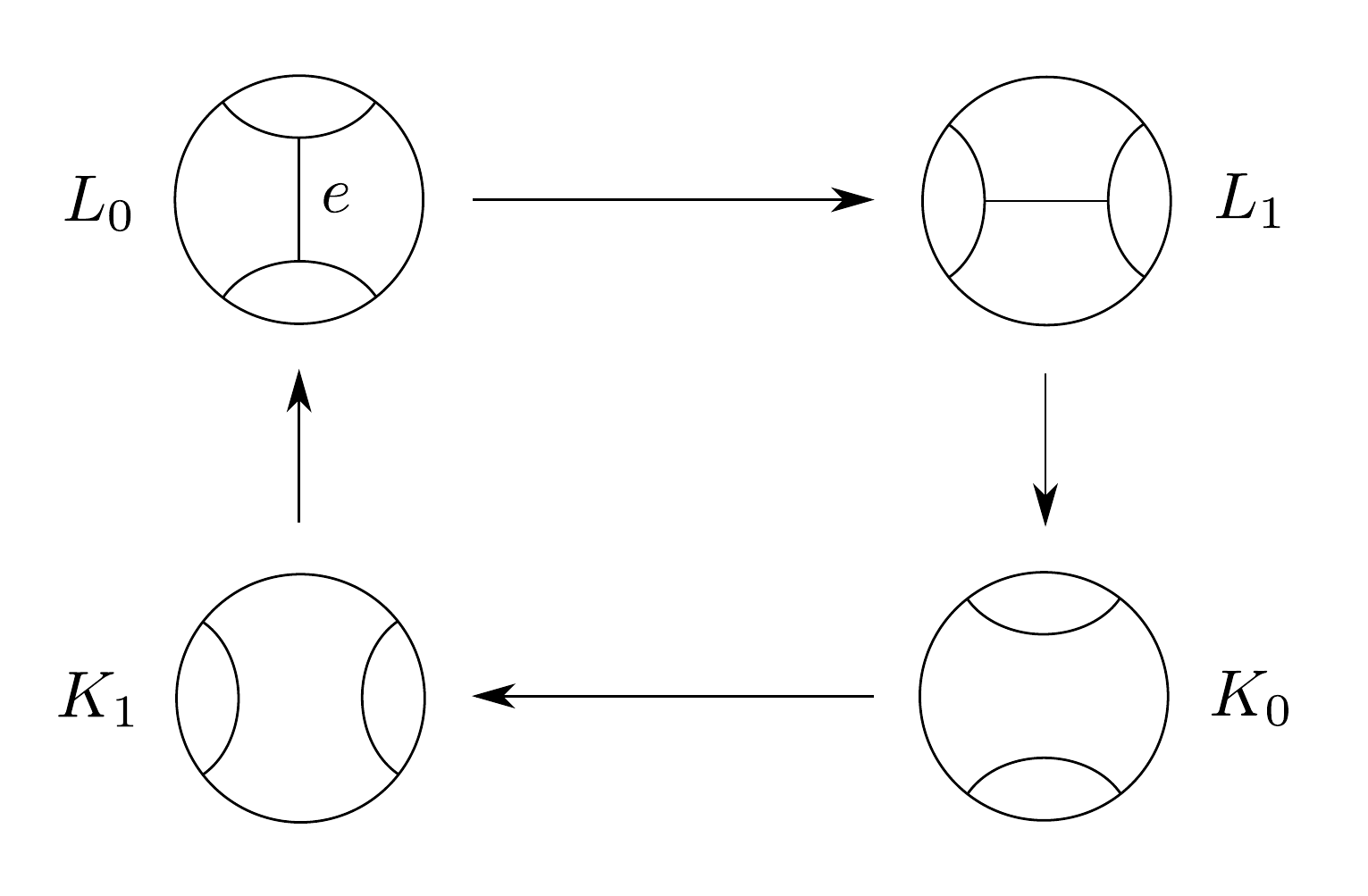}
  \caption{
    \label{fig:4-periodic}
    Given a planar web $L_0$ and choice of edge $e$, we can construct
    a square of planar webs $L_0$, $L_1$, $K_0$, and $K_1$ related by
    standard foam cobordisms in $\Reals^3$.
    The webs are identical outside of the indicated 2-balls.
  }
\end{figure}

As described in \cite{Khovanov} Section 4.3, given a planar web $L_0$
and a choice of edge $e$ we can construct a square of planar webs
$L_0$, $L_1$, $K_0$, and $K_1$ that are related by standard foam
cobordisms in $\Reals^3$ (see Figure \ref{fig:4-periodic}).
By \cite{Khovanov} Lemma 4.11, the image of this square under the
functor $\langle - \rangle$ is a 4-periodic complex:
\begin{eqnarray}
  \label{periodic-khovanov}
  \begin{tikzcd}
    \langle L_0 \rangle \arrow{r}{1} &
    \langle L_1 \rangle \arrow{d}{1} \\
    \langle K_1 \rangle \arrow{u}{1} &
    \langle K_0 \rangle. \arrow{l}[swap]{2}
  \end{tikzcd}
\end{eqnarray}
The integers indicate the degrees of the linear maps.
If we further base-change to the ground field $\F$ via the ring
homomorphism $\F[E] \rightarrow \F$, $E \mapsto 0$, we obtain a
4-periodic complex for the combinatorial functor $J^\flat$:
\begin{eqnarray}
\label{periodic-flat}
\begin{tikzcd}
  J^\flat(L_0) \arrow{r}{1} &
  J^\flat(L_1) \arrow{d}{1} \\
  J^\flat(K_1) \arrow{u}{1} &
  J^\flat(K_0). \arrow{l}[swap]{2}
\end{tikzcd}
\end{eqnarray}
In \cite{KM-exact} Lemma 10.2, Kronheimer and Mrowka describe an
analogous 4-periodic complex for the gauge-theoretic functor
$J^\sharp$:
\begin{eqnarray}
\label{periodic-km}
\begin{tikzcd}
  J^\sharp(L_0) \arrow{r} &
  J^\sharp(L_1) \arrow{d} \\
  J^\sharp(K_1) \arrow{u} &
  J^\sharp(K_0). \arrow{l}
\end{tikzcd}
\end{eqnarray}
We note that the vector spaces for $J^\flat$ are graded, but the
vector spaces for $J^\sharp$ are ungraded.
Kronheimer and Mrowka prove the following result:

\begin{theorem}(Kronheimer--Mrowka \cite[Lemma 10.3]{KM-exact})
\label{theorem:corners}
In the 4-periodic complex (\ref{periodic-km}) for $J^\sharp$, the
homology groups at diametrically opposite corners are equal.
\end{theorem}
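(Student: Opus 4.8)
The plan is to deduce the result from the unoriented skein exact triangle for $J^\sharp$ proved in \cite{KM-exact}, by exhibiting the $4$-periodic complex (\ref{periodic-km}) as the splice of two exact triangles that share a common auxiliary term. Since $L_1$, $K_0$, and $K_1$ are all obtained from $L_0$ by local modifications supported in the single $2$-ball around $e$ shown in Figure \ref{fig:4-periodic}, I would first apply the skein exact triangle at $e$ to produce an auxiliary planar web $X$ (the remaining local resolution) together with two $3$-periodic exact sequences
\[
  \cdots \to J^\sharp(X) \xrightarrow{\alpha} J^\sharp(L_0) \xrightarrow{a} J^\sharp(L_1) \xrightarrow{\beta} J^\sharp(X) \to \cdots,
\]
\[
  \cdots \to J^\sharp(X) \xrightarrow{\gamma} J^\sharp(K_0) \xrightarrow{c} J^\sharp(K_1) \xrightarrow{\delta} J^\sharp(X) \to \cdots,
\]
in which $a$ and $c$ are the corresponding maps of (\ref{periodic-km}). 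I would then identify the two remaining maps of the square as the composites $b = \gamma \circ \beta$ and $d = \alpha \circ \delta$ factoring through $J^\sharp(X)$.

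Granting this structure, the vanishing of consecutive composites in (\ref{periodic-km}) is automatic: $b \circ a = \gamma \circ (\beta \circ a) = 0$ and $a \circ d = (a \circ \alpha) \circ \delta = 0$ because $\beta \circ a$ and $a \circ \alpha$ vanish in the first exact sequence, and similarly $c \circ b = 0$ and $d \circ c = 0$ from the second. This reproduces the complex of \cite{KM-exact} Lemma 10.2 and confirms that the two exact triangles are glued along $J^\sharp(X)$ in the expected way.

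The conclusion is then pure homological algebra. Writing $H(-)$ for the homology of (\ref{periodic-km}) at the indicated corner and abbreviating $J^\sharp(X)$ by $V$, exactness of the first sequence gives $\Ker a = \im \alpha$ and $\Ker \alpha = \im \beta$, while $\im d = \alpha(\im \delta)$; hence $\alpha$ induces an isomorphism $H(L_0) = \Ker a / \im d \cong V/(\im \beta + \im \delta)$. The symmetric computation with the second sequence gives $\Ker c = \im \gamma$, $\Ker \gamma = \im \delta$, and $\im b = \gamma(\im \beta)$, whence $H(K_0) \cong V/(\im \beta + \im \delta)$, so that $H(L_0) \cong H(K_0)$. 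Using both sequences at the other two corners gives $H(L_1) = \Ker b / \im a \cong \im \beta \cap \im \delta$ and $H(K_1) = \Ker d / \im c \cong \im \beta \cap \im \delta$, so $H(L_1) \cong H(K_1)$, which is the assertion for both pairs of diametrically opposite corners.

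The main obstacle is the geometric input of the first paragraph, not the subsequent algebra. I must produce a \emph{single} auxiliary web $X$ that serves as the third term of both triangles and verify that the cross-cobordisms inducing $b$ and $d$ factor through $J^\sharp(X)$ as the stated composites of connecting maps. This calls for (i) matching the four local tangles of the square against the resolutions entering the exact triangle of \cite{KM-exact} at $e$, so that $X$ is common to both triangles; (ii) the functoriality and naturality of $J^\sharp$ under the foam cobordisms relating these resolutions, which is what realizes $\alpha, \beta, \gamma, \delta$ as maps induced by explicit foams; and (iii) foam-relation arguments identifying the composite foams $L_1 \to X \to K_0$ and $K_1 \to X \to L_0$ with the cobordisms defining $b$ and $d$. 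Because $J^\sharp$ is ungraded, none of this bookkeeping can be tracked using the degree data visible in the combinatorial complex (\ref{periodic-flat}); the factorizations must instead be established directly at the level of cobordism maps.
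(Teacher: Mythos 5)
The paper does not actually prove this statement: it is quoted from Kronheimer--Mrowka \cite{KM-exact}, and the only description given of their argument is that the square (\ref{periodic-km}) extends to an \emph{octahedral} diagram involving \emph{two} additional webs, both of which are \emph{nonplanar}. Measured against that, the closing algebra of your proposal is correct: granted two long exact sequences through a common term $V=J^\sharp(X)$ with $b=\gamma\circ\beta$ and $d=\alpha\circ\delta$, the identifications $H(L_0)\cong V/(\im\beta+\im\delta)\cong H(K_0)$ and $H(L_1)\cong \im\beta\cap\im\delta\cong H(K_1)$ go through exactly as you write them. The gap is entirely in the geometric input of your first paragraph, which you flag as the main obstacle but do not supply, and which you also misstate in a way that matters.

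Concretely: (i) You describe the auxiliary web $X$ as \emph{planar}. The extra webs entering the Kronheimer--Mrowka octahedron contain crossings and are nonplanar, and this is not an incidental detail --- it is the stated reason the same argument cannot be run for $J^\flat$, whose domain is the planar subcategory, and hence the reason Theorem \ref{theorem:corners} can coexist with Theorem \ref{theorem:homology}. A reconstruction that posits a planar $X$ erases the very feature that makes the theorem specific to $J^\sharp$. (ii) The structure you posit --- a single auxiliary web serving as the third vertex of exact triangles over both the edge $L_0\to L_1$ and the edge $K_0\to K_1$, with the two cross maps of the square factoring through it as composites of connecting maps --- is a degenerate octahedron with one pole rather than two, and it is not established (nor obviously true) that the foam cobordisms defining $b$ and $d$ admit such factorizations through a single web. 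Establishing the exact triangles themselves and the required factorizations of cobordism maps is essentially the entire content of \cite{KM-exact}; as it stands, your proposal supplies only the routine diagram chase that follows once that gauge-theoretic input is in hand.
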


The proof of Theorem \ref{theorem:corners} relies on the fact that the
4-periodic complex (\ref{periodic-km}) for $J^\sharp$ can be extended
to an octahedral diagram involving two additional webs that are
nonplanar.
Since $J^\flat$ is not defined for nonplanar webs, it is natural to
ask whether the analog to Theorem \ref{theorem:corners} holds for
$J^\flat$.
We answer this question in the negative by exhibiting a specific
counterexample:

\begin{theorem}
\label{theorem:homology}
For the (irreducible) web
$L_0 = W_4$ shown in Figure \ref{fig:w4} with the indicated choice of
edge $e$, the homology of the complex (\ref{periodic-flat}) for
$J^\flat$ is zero at $K_0$ but nonzero at $L_0$.
\end{theorem}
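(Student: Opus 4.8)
The plan is to compute the four state spaces and the four maps in the complex (\ref{periodic-khovanov}) explicitly as graded free $\F[E]$-modules and homogeneous $\F[E]$-linear maps, and then to base-change along $E \mapsto 0$ to obtain the complex (\ref{periodic-flat}) and read off its homology at $K_0$ and $L_0$ by elementary linear algebra over $\F$. The key structural simplification is that, since each $\langle X \rangle$ is a free graded module of rank $\Tait(X)$, the node $J^\flat(X) = \langle X \rangle \otimes_{\F[E]} \F$ is an $\F$-vector space of dimension $\Tait(X)$ with a basis indexed by a homogeneous generating set $\{v^X_i\}$ of $\langle X \rangle$ carrying prescribed degrees $\{g^X_i\}$. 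Because a homogeneous degree-$d$ map of free graded $\F[E]$-modules sends a degree-$g$ generator to a combination of target generators in which a degree-$h$ generator appears with coefficient a scalar multiple of $E^{(g+d-h)/6}$ (nonzero only when $6 \mid g+d-h$ and $h \le g+d$), its reduction modulo $E$ retains only the $h = g+d$ terms and is therefore the honest $\F$-matrix recording the degree-matched pairs $g^Y_j = g^X_i + d$. Thus each of the four maps in (\ref{periodic-flat}) connects only generators whose degrees differ by the indicated amount ($1$, $1$, $2$, $1$), and the whole problem collapses to determining these matching coefficients.

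First I would fix the web $L_0 = W_4$ and the marked edge $e$ of Figure \ref{fig:w4}, and from the local replacement of Figure \ref{fig:4-periodic} write down the three companion webs $L_1$, $K_0$, $K_1$ explicitly. Next I would compute the Tait numbers $\Tait(L_0)$, $\Tait(L_1)$, $\Tait(K_0)$, $\Tait(K_1)$, which by \cite{Khovanov} Proposition 4.18 are the ranks of the four free modules and hence the dimensions of the four nodes of (\ref{periodic-flat}); simultaneously I would record the graded dimension of each state space, i.e.\ the multiset of generator degrees $\{g^X_i\}$, using the Khovanov--Robert state-space construction. With the gradings in hand, the degree constraint above already forces many matrix entries to vanish and identifies the remaining free scalar positions.

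The remaining and decisive step is to compute the actual scalar coefficients of the four cobordism maps. I would do this by applying the Khovanov--Robert foam-evaluation rules to the standard cobordisms of Figure \ref{fig:4-periodic} (the elementary zip, unzip, and related foams), evaluating each on the homogeneous generators and reducing modulo $E$. Having obtained the four matrices over $\F$, I would compute the homology of (\ref{periodic-flat}) at the two relevant corners by rank--nullity: at $K_0$ one must verify that $\ker\bigl(J^\flat(K_0) \to J^\flat(K_1)\bigr)$ coincides with $\im\bigl(J^\flat(L_1) \to J^\flat(K_0)\bigr)$, yielding vanishing homology, while at $L_0$ one must exhibit a class in $\ker\bigl(J^\flat(L_0) \to J^\flat(L_1)\bigr)$ that does not lie in $\im\bigl(J^\flat(K_1) \to J^\flat(L_0)\bigr)$, yielding nonvanishing homology.

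I expect the main obstacle to be precisely this explicit evaluation of the cobordism maps: the degree bookkeeping pins down the shape of each matrix but not its nonzero scalar entries, and these require a careful (if finite) computation with the foam-evaluation formalism, most naturally organized around the Tait-coloring basis of each state space. Once the four matrices are in hand the homology computation is purely mechanical, and the contrast with Theorem \ref{theorem:corners} — which forces the homologies of $J^\sharp$ at the diametrically opposite corners $L_0$ and $K_0$ to agree — is exactly what makes the asymmetry ``zero at $K_0$, nonzero at $L_0$'' a genuine counterexample distinguishing $J^\flat$ from $J^\sharp$.
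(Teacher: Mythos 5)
Your overall plan---compute the four graded state spaces and the four maps, reduce mod $E$, and read off the homology---runs into two genuine obstructions that the paper's proof is specifically structured to circumvent. First, you presuppose that the graded module $\langle L_0 \rangle$ and its multiset of generator degrees can be computed directly ``using the Khovanov--Robert state-space construction.'' But $L_0 = W_4$ is \emph{irreducible}, and the only available algorithm (from \cite{Boozer}) constructs bases for \emph{reducible} webs; for $W_4$ one can only produce a full-rank submodule $M \subseteq \langle L_0 \rangle$ whose homogeneous generators may sit above the true generators by multiples of $\deg(E)=6$. This is exactly why the paper cannot carry out your direct computation and instead must argue through a three-case analysis (no generator missing, a generator of degree $-1$ missing, a generator of degree $-5$ missing), showing that the conclusion ``zero at $K_0$, nonzero at $L_0$'' holds in every case.

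Second, and more seriously, your identification $J^\flat(X) = \langle X \rangle \otimes_{\F[E]} \F$ with $\dim J^\flat(X) = \Tait(X)$ and basis given by the reductions of the generators is not justified and is in fact false here: the relationship between $\langle - \rangle$ and $J^\flat$ only gives $\dim J^\flat(K) \leq \Tait(K)$, with possible strict inequality caused by \emph{vanishing} generating half-foams (nonzero in $\langle K \rangle$ but zero in $J^\flat(K)$). Reducible webs have no vanishing generators, but for the irreducible web $W_4$ the computation in the paper finds two vanishing generators of $M$ (degrees $1$ and $5$), so in one of the three cases $\dim J^\flat(L_0) = 178 < 180 = \Tait(L_0)$. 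Your rank--nullity computation at $L_0$ would therefore be performed in the wrong vector space. Relatedly, the paper never needs the full scalar matrices you propose to compute: it combines Smith decompositions, the fact that the homology of (\ref{periodic-khovanov}) is $E$-torsion (Theorem \ref{theorem:torsion}), and quantum-rank comparisons of kernels and complements, together with the rule that a generator mapping to $E$ times something dies in $J^\flat$, to pin down the homology without ever exhibiting explicit matrices for the irreducible corner. To repair your proposal you would need to (i) replace the direct computation of $\langle L_0 \rangle$ by the submodule-plus-case-analysis device, and (ii) track vanishing half-foams when passing from $\langle - \rangle$ to $J^\flat$, rather than treating the passage as a plain base change of free modules.
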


In particular, Theorems \ref{theorem:corners}
and \ref{theorem:homology} show that the restriction of the
functor $J^\sharp$ to the subcategory of planar webs is not the same
as the functor $J^\flat$.
We emphasize that this result does not refute Conjecture
\ref{conj:tait}, and thus does not invalidate Kronheimer and Mrowka's
strategy for proving the four-color theorem.

\begin{figure}
  \centering
  \includegraphics[scale=0.8]{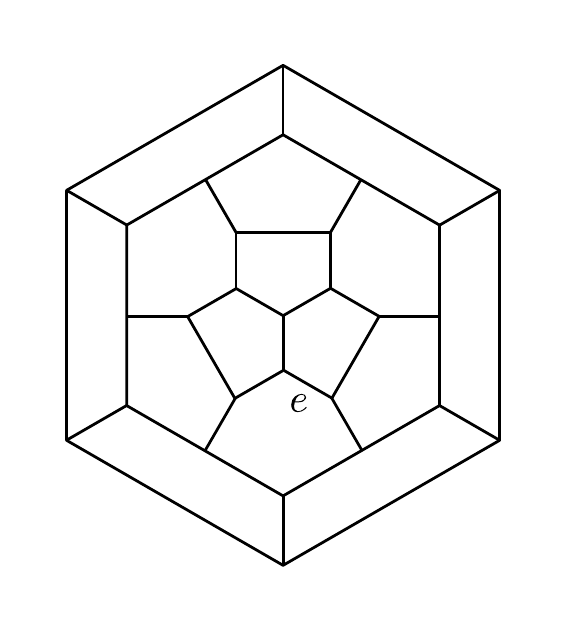}
  \caption{
    \label{fig:w4}
    Irreducible web $W_4$.
    We consider the 4-periodic complex corresponding to the indicated
    edge $e$.
  }
\end{figure}

\section{Theoretical results}

Consider the situation in which $L_0$ is irreducible but
$L_1$, $K_0$, and $K_1$ are all reducible.
Given bases for the state spaces in the complex
(\ref{periodic-khovanov}) for $\langle - \rangle$, we can express the
linear maps in the complex as $\F[E]$-valued matrices relative to
these bases.
By performing Smith decompositions of these matrices, we can decompose
each state space in the complex into a kernel, denoted by subscript
$k$, and a complement, denoted by subscript $c$:
\begin{eqnarray}
\label{periodic-decomp-L0}
\begin{tikzcd}
  \langle L_0 \rangle_k \oplus \langle L_0 \rangle _c \arrow{r}{1} &
  \langle L_1 \rangle_k \oplus \langle L_1 \rangle_c \arrow{d}{1} \\
  \langle K_1 \rangle_k \oplus \langle K_1 \rangle_c \arrow{u}{1} &
  \langle K_0 \rangle_k \oplus \langle K_0 \rangle_c. \arrow{l}[swap]{2}
\end{tikzcd}
\end{eqnarray}
We have:
\begin{theorem}
\label{theorem:torsion}
In the 4-periodic complex (\ref{periodic-khovanov}), the homology
groups are $E$-torsion.
\end{theorem}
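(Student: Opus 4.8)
The plan is to reduce the statement to an exactness claim and then verify that claim after inverting $E$. By Khovanov--Robert Proposition 4.18 each state space in (\ref{periodic-khovanov}) is a finitely generated free module over the principal ideal domain $\F[E]$, so the homology at every corner is a finitely generated $\F[E]$-module. For such a module $M$, being $E$-torsion (every element killed by a power of $E$) is equivalent to the vanishing of the localization $M \otimes_{\F[E]} \F[E, E^{-1}]$: inverting $E$ kills exactly the $E$-primary part while leaving any free summand or prime-to-$E$ torsion intact. Since $\F[E, E^{-1}]$ is flat over $\F[E]$, localization commutes with homology, so it suffices to show that the localized $4$-periodic complex $(\ref{periodic-khovanov}) \otimes_{\F[E]} \F[E, E^{-1}]$ is exact.

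To prove exactness after inverting $E$, I would pass to the generic regime of the foam theory in which the three colors are distinct. The specialization $E_1, E_2 \mapsto 0$, $E_3 \mapsto E$ realizes $E_1, E_2, E_3$ as the elementary symmetric functions of three variables $X_1, X_2, X_3$ that are the roots of $t^3 - E$; over $\F[E, E^{-1}]$ this cubic is separable --- in characteristic two its derivative is $t^2$, which is coprime to $t^3 - E$ once $E$ is a unit --- so after a finite faithfully flat base change over which $t^3 - E$ splits, the three colors become distinct. In this distinct-colors regime the state space $\langle K \rangle$ is free with a basis indexed by the Tait colorings of $K$, and the generating foam cobordisms act with respect to this basis through the associated coloring/evaluation data: each sends the line of a coloring either isomorphically, by a unit multiple, to the line of a locally compatible coloring, or to zero. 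Concretely, I expect to produce over $\F[E, E^{-1}]$ a contracting homotopy built from the reverse (zip/unzip and dotted) foam cobordisms running backwards around the square, whose composites with the forward maps are multiplication by a unit; the degree bookkeeping is consistent, since the forward maps have degrees $1,1,2,1$ and the complementary backward maps have degrees $5,5,4,5$, each adjacent pair summing to $\deg E = 6$.

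Because $L_0$, $L_1$, $K_0$, and $K_1$ coincide outside the distinguished $2$-ball around $e$, exactness of the localized complex then reduces to a purely local check: fixing a Tait coloring of the common exterior, one analyzes the finite complex assembled from the interior colorings compatible with each of the four local tangles and verifies directly that the four cobordism maps make this local $4$-periodic complex exact at every corner. Assembling these local complexes over all exterior colorings yields exactness of the full localized complex, and hence the $E$-torsion statement. The main obstacle is precisely this last verification --- identifying the four local tangles together with the zip/unzip and degree-two dotted cobordisms between them, and confirming via the $(E_1, E_2, E_3) = (0, 0, E)$ foam evaluation that the induced maps on the coloring basis genuinely assemble into an exact (equivalently, contractible) periodic complex rather than merely one with matching Euler characteristic. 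The separability of $t^3 - E$ over $\F[E, E^{-1}]$ is what licenses the diagonalization underlying this local analysis, and the characteristic-two bookkeeping there is the delicate point.
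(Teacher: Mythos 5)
Your opening reduction is correct and is exactly the (implicit) strategy of the paper: since each state space is a finitely generated free $\F[E]$-module, the homology groups are finitely generated, being $E$-torsion is equivalent to vanishing after the flat base change $\F[E] \to \F[E,E^{-1}]$, and localization commutes with taking homology, so the theorem reduces to exactness of the localized $4$-periodic complex. The paper disposes of that exactness claim in one line by citing Khovanov--Robert (Proposition 4.12 together with the remarks following Proposition 4.17), which assert precisely that the complex (\ref{periodic-khovanov}) becomes exact after inverting $E$.

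Where your proposal falls short is that it attempts to reprove this exactness from scratch and, by your own admission, does not finish: the ``main obstacle'' you identify --- producing the contracting homotopy over $\F[E,E^{-1}]$ and verifying, via the local analysis of the four tangles in the coloring basis, that the composites with the forward maps are unit multiples of the identity rather than merely degree-consistent --- is the entire content of the claim. Matching Euler characteristics and degree bookkeeping (adjacent degrees summing to $\deg E = 6$) do not by themselves rule out nontrivial homology surviving localization, and the characteristic-two evaluation of the relevant dotted and zip/unzip foams is exactly where such an argument could fail if done carelessly. Your separability observation for $t^3 - E$ over $\F[E,E^{-1}]$ is correct and does license the passage to the distinct-colors regime, so the outline is plausible; but as written the proof has a genuine gap at its central step. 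The gap can be closed either by carrying out the local verification in full or, as the paper does, by citing the Khovanov--Robert results that already contain it.
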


\begin{proof}
This follows directly from \cite{Khovanov} Proposition 4.12 and
comments after the statement of Proposition 4.17, which show that the
4-periodic complex (\ref{periodic-khovanov}) becomes exact after
localizing $\F[E] \rightarrow \F[E,E^{-1}]$.
\end{proof}

By Theorem \ref{theorem:torsion}, the complement for each state space
is mapped into a submodule of full rank of the kernel for the subsequent
state space.

As described in \cite{Boozer}, we have an algorithm for constructing
bases of the state spaces for reducible webs, and we can use a
computer to construct a submodule $M$ of $\langle L_0 \rangle$ of full
rank.
The only possible difference between the submodule $M$ and the actual
state space $\langle L_0 \rangle$ is that homogeneous generators of
$M$ may be shifted upwards in degree by multiples of $\deg(E) = 6$
relative to corresponding homogeneous generators of
$\langle L_0 \rangle$.
We thus obtain a periodic complex analogous to
(\ref{periodic-decomp-L0}), but with $\langle L_0 \rangle$ replaced by
$M$:
\begin{eqnarray}
\label{periodic-decomp}
\begin{tikzcd}
  M_k \oplus M_c \arrow{r}{1} &
  \langle L_1 \rangle_k \oplus \langle L_1 \rangle_c \arrow{d}{1} \\
  \langle K_1 \rangle_k \oplus \langle K_1 \rangle_c \arrow{u}{1} &
  \langle K_0 \rangle_k \oplus \langle K_0 \rangle_c. \arrow{l}[swap]{2}
\end{tikzcd}
\end{eqnarray}
In principle, it need not be the case that the image of
$\langle K_1 \rangle \rightarrow \langle L_0 \rangle$ is contained in
$M$, but if not we can simply replace $M$ with the module spanned by
$M$ and this image.
By Theorem \ref{theorem:torsion},
the complex (\ref{periodic-decomp}) determines the ranks of the
modules in the complex (\ref{periodic-decomp-L0}).
In particular,
\begin{align*}
  &\rank(\langle L_0 \rangle_k) = \rank(M_k), &
  &\rank(\langle L_0 \rangle_c) = \rank(M_c).
\end{align*}
By computing the quantum ranks of the modules in the complex
(\ref{periodic-decomp}), we can strongly constrain the possibilities
for the complex (\ref{periodic-decomp-L0}) for $\langle - \rangle$,
which in turn strongly constrains the possibilities for the homology
of the complex (\ref{periodic-flat}) for $J^\flat$.

The relationship between the functors $\langle - \rangle$ and
$J^\flat$ is discussed in \cite[Section 3]{Boozer}.
We briefly summarize the results we will need.
Given a planar web $K$, we define a
\emph{half-foam $H$ with boundary $K$}
to be a foam cobordism in $\Reals^3$ from the empty web to $K$.
A half-foam $H$ with boundary $K$ determines elements
$\langle H \rangle \in \langle K \rangle$ and
$J^\flat(H) \in J^\flat(K)$.
If $\langle H \rangle$ is zero then $J^\flat(H)$ must be zero as well,
but in principle it may happen that $J^\flat(H)$ is zero and
$\langle H \rangle$ is nonzero, in which case we say that $H$ is a
\emph{vanishing} half-foam.
The state space $\langle K \rangle$ is freely generated by $\Tait(K)$
half-foams with boundary $K$.
Since some of the generating half-foams may be vanishing, we have
\begin{align*}
  \dim J^\flat(K) \leq \rank(\langle K \rangle) = \Tait(K), &&
  \qdim J^\flat(K) \leq \qrank(\langle K \rangle),
\end{align*}
with equality holding in the case that there are no vanishing
generators.
For $K$ reducible, there are no vanishing generators of
$\langle K \rangle$.
It is an open question as to whether there is a nonreducible web $K$
for which $\langle K \rangle$ has vanishing generators.
For every vanishing generator $\langle H_1 \rangle$ of degree $d_1$,
there must be a corresponding vanishing generator
$\langle H_2 \rangle$ of degree $d_2$ such that
$d_1 + d_2$ is a positive integer multiple of $\deg(E) = 6$.
If there are no vanishing generators of
$\langle K \rangle$, then for every generator of
$\langle K \rangle$ in degree $d$ there is a corresponding generator
in degree $-d$, so $\qrank(\langle K \rangle)$ is symmetric under
$q \rightarrow q^{-1}$.

Similar considerations apply to the linear maps corresponding to foam
cobordisms.
Consider a foam cobordism between planar webs $K$ and $L$ and the
corresponding linear maps
$\langle K \rangle \rightarrow \langle L \rangle$ and
$J^\flat(K) \rightarrow J^\flat(L)$.
Suppose $H$ is a half-foam with boundary $K$ such that
the image of $\langle H \rangle$ under
$\langle K \rangle \rightarrow \langle L \rangle$ can be expressed
as $E x$ for $x \in \langle L \rangle$.
Then $J^\flat(H)$ maps to zero under $J^\flat(K) \rightarrow J^\flat(L)$.

\section{Computer results}

We take $L_0$ to be the (irreducible) web $W_4$ shown in
Figure \ref{fig:w4} with the indicated choice of edge $e$.
The resulting webs $L_1$, $K_0$, and $K_1$ are all reducible.
We use the computer program described in \cite{Boozer} to calculate
the ranks and quantum ranks of the modules in the complex
(\ref{periodic-decomp}) and display the results in Table
\ref{table:computer-results}.
The expressions in parentheses indicate the degrees of
vanishing generators that map to zero when we base-change from $\F[E]$
to $\F$ by setting $E=0$.
In particular, there are two vanishing generators for $M$, one of
degree 1 and one of degree 5.
Since $\Tait(L_0) = \rank(M) = 180$, the fact that there are two
vanishing generators gives us a lower bound of $178$ for
$\dim J^\flat(L_0)$.
There are three possible cases for the state space
$\langle L_0 \rangle$:

\begin{enumerate}
\item
No generators are missing.
In this case $\langle L_0 \rangle = M$ and $\dim J^\flat(L_0) = 178$.
Since $\dim J^\sharp(L_0) \geq \Tait(L_0) = 180$, we already know
that this case is not consistent with $J^\flat = J^\sharp$.

\item
A generator of degree $-1$ is missing.
In this case $M$ is a proper submodule of $\langle L_0 \rangle$, with
the vanishing generator of degree 5 in $M$ shifted up relative to the
missing generator of degree $-1$ in $\langle L_0 \rangle$, and
$\dim J^\flat(L_0) = 180$.

\item
A generator of degree $-5$ is missing.
In this case $M$ is a proper submodule of $\langle L_0 \rangle$, with
the vanishing generator of degree 1 in $M$ shifted up relative to the
missing generator of degree $-5$ in $\langle L_0 \rangle$, and
$\dim J^\flat(L_0) = 180$.
\end{enumerate}

For each case, we show that the homology of the complex
(\ref{periodic-flat}) for $J^\flat$ is zero at $K_0$ but nonzero at
$L_0$, thus proving Theorem \ref{theorem:homology} from the
introduction.

\begin{table}
\begin{align*}
  \begin{array}{lll}
    \textup{module} &
    \rank &
    \qrank \\
    \\
    M &
    180 &
    q^{-6}+11q^{-4}+10q^{-3}+29q^{-2}+19q^{-1}+38+19q+29q^2+10q^3+11q^4+q^6+(q+q^5)
    \\
    M_k &
    72 &
    2q^{-4}+q^{-3}+11q^{-2}+3q^{-1}+19+3q+18q^2+3q^3+9q^4+q^6+(q+q^5)
    \\
    M_c &
    108 &
    q^{-6}+9q^{-4}+9q^{-3}+18q^{-2}+16q^{-1}+19+16q+11q^2+7q^3+2q^4
    \\
    \\
    \langle L_1 \rangle &
    168 &
    2q^{-5}+8q^{-4}+12q^{-3}+24q^{-2}+22q^{-1}+32+22q+24q^2+12q^3+8q^4+2q^5
    \\
    \langle L_1 \rangle_k &
    108 &
    q^{-5}+q^{-4}+9q^{-3}+9q^{-2}+18q^{-1}+16+19q+15q^2+11q^3+7q^4+2q^5
    \\
    \langle L_1 \rangle_c &
    60 &
    q^{-5}+7q^{-4}+3q^{-3}+15q^{-2}+4q^{-1}+16+3q+9q^2+q^3+q^4
    \\
    \\
    \langle K_0 \rangle &
    72 &
    q^{-5}+q^{-4}+10q^{-3}+3q^{-2}+19q^{-1}+4+19q+3q^2+10q^3+q^4+q^5
    \\
    \langle K_0 \rangle_k &
    60 &
    q^{-4}+7q^{-3}+3q^{-2}+15q^{-1}+4+16q+3q^2+9q^3+q^4+q^5
    \\
    \langle K_0 \rangle_c &
    12 &
    q^{-5}+3q^{-3}+4q^{-1}+3q+q^3
    \\
    \\
    \langle K_1 \rangle &
    84 &
    2q^{-5}+q^{-4}+12q^{-3}+3q^{-2}+22q^{-1}+4+22q+3q^2+12q^3+q^4+2q^5 \\
    \langle K_1 \rangle_k &
    12 &
    q^{-3}+3q^{-1}+4q+3q^3+q^5 \\
    \langle K_1 \rangle_c &
    72 &
    2q^{-5}+q^{-4}+11q^{-3}+3q^{-2}+19q^{-1}+4+18q+3q^2+9q^3+q^4+q^5
  \end{array}
\end{align*}
  \caption{
    \label{table:computer-results}
    Ranks and quantum ranks of the modules in the complex
    (\ref{periodic-decomp}).
    The expressions in parentheses indicate the degrees of
    vanishing generators that map to zero when we base-change from
    $\F[E]$ to $\F$ by setting $E=0$.
  }
\end{table}

\subsection{Case 1: no generators are missing}

From Table \ref{table:computer-results}, it follows that the
quantum ranks of the modules for $L_0$ are
\begin{align*}
    &\qrank(\langle L_0 \rangle) =
    q^{-6}+11q^{-4}+10q^{-3}+29q^{-2}+19q^{-1}+38+19q+29q^2+10q^3+11q^4+q^6+(q+q^5),
    \\
    &\qrank(\langle L_0 \rangle_k) =
    2q^{-4}+q^{-3}+11q^{-2}+3q^{-1}+19+3q+18q^2+3q^3+9q^4+q^6+(q+q^5),
    \\
    &\qrank(\langle L_0 \rangle_c) =
    q^{-6}+9q^{-4}+9q^{-3}+18q^{-2}+16q^{-1}+19+16q+11q^2+7q^3+2q^4.
\end{align*}
Thus
\begin{align*}
  &\qrank(\langle L_0 \rangle_k) -
  q \cdot \qrank(\langle K_1 \rangle_c) = 0, &
  &\qrank(\langle L_1 \rangle_k) -
  q \cdot \qrank(\langle L_0 \rangle_c) = q^{-4} - q^2, \\
  &\qrank(\langle K_1 \rangle_k) -
  q^2 \cdot \qrank(\langle K_0 \rangle_c) = 0, &
  &\qrank(\langle K_0 \rangle_k) -
  q \cdot \qrank(\langle L_1 \rangle_c) = 0.
\end{align*}
It follows that the quantum dimensions of the homology groups for the
complex (\ref{periodic-flat}) for $J^\flat$ are
\begin{align*}
  &\qdim(H(L_0)) = q, &
  &\qdim(H(L_1)) = q^{-4}, \\
  &\qdim(H(K_1)) = 1 + q^4, &
  &\qdim(H(K_0)) = 0,
\end{align*}
since we have generators of degrees $0$ and $4$ in
$\langle K_1 \rangle_c$ that map to the vanishing generators of
$\langle L_0 \rangle_k$, and we have a
generator of degree $1$ in $\langle L_0 \rangle_c$ that maps to
$E$ times a generator of degree $-4$ in $\langle L_1 \rangle_k$.
In particular, the homology is zero at $K_0$ and nonzero at $L_0$.

\subsection{Case 2: a generator of degree $-1$ is missing}

From Table \ref{table:computer-results}, it follows that the ranks and
quantum ranks of the modules for $L_0$ are
\begin{align*}
  &\qrank(\langle L_0 \rangle) =
  q^{-6}+11q^{-4}+10q^{-3}+29q^{-2}+20q^{-1}+38+20q+29q^2+10q^3+11q^4+q^6,
  \\
  &\qrank(\langle L_0 \rangle_k) =
  2q^{-4}+q^{-3}+11q^{-2}+4q^{-1}+19+4q+18q^2+3q^3+9q^4+q^6,
  \\
  &\qrank(\langle L_0 \rangle_c) =
  q^{-6}+9q^{-4}+9q^{-3}+18q^{-2}+16q^{-1}+19+16q+11q^2+7q^3+2q^4.
\end{align*}
Thus
\begin{align*}
  &\qrank(\langle L_0 \rangle_k) -
  q \cdot \qrank(\langle K_1 \rangle_c) = q^{-1} - q^5, &
  &\qrank(\langle L_1 \rangle_k) -
  q \cdot \qrank(\langle L_0 \rangle_c) = q^{-4} - q^2, \\
  &\qrank(\langle K_1 \rangle_k) -
  q^2 \cdot \qrank(\langle K_0 \rangle_c) = 0, &
  &\qrank(\langle K_0 \rangle_k) -
  q \cdot \qrank(\langle L_1 \rangle_c) = 0.
\end{align*}
It follows that the quantum dimensions of the homology groups for the
complex (\ref{periodic-flat}) for $J^\flat$ are
\begin{align*}
  &\qdim(H(L_0)) = q^{-1} + q, &
  &\qdim(H(L_1)) = q^{-4}, \\
  &\qdim(H(K_1)) = q^4, &
  &\qdim(H(K_0)) = 0,
\end{align*}
since we have a generator of degree $4$ in $\langle K_1 \rangle_c$
that maps to $E$ times a generator of degree $-1$ in
$\langle L_0 \rangle_k$ and we have a generator of degree $1$ in
$\langle L_0 \rangle_c$ that maps $E$ times a generator of degree
$-4$ in $\langle L_1 \rangle_k$.
In particular, the homology is zero at $K_0$ and nonzero at $L_0$.

\subsection{Case 3: a generator of degree $-5$ is missing}

From Table \ref{table:computer-results}, it follows that the ranks and
quantum ranks of the modules for $L_0$ are
\begin{align*}
  &\qrank(\langle L_0 \rangle) =
  q^{-6}+q^{-5}+11q^{-4}+10q^{-3}+29q^{-2}+19q^{-1}+38+
  19q+29q^2+10q^3+11q^4+q^5+q^6,
  \\
  &\qrank(\langle L_0 \rangle_k) =
  q^{-5}+2q^{-4}+q^{-3}+11q^{-2}+3q^{-1}+19+3q+18q^2+3q^3+9q^4+q^5+q^6,
  \\
  &\qrank(\langle L_0 \rangle_c) =
  q^{-6}+9q^{-4}+9q^{-3}+18q^{-2}+16q^{-1}+19+16q+11q^2+7q^3+2q^4.
\end{align*}
Thus
\begin{align*}
  &\qrank(\langle L_0 \rangle_k) -
  q \cdot \qrank(\langle K_1 \rangle_c) = q^{-5} - q, &
  &\qrank(\langle L_1 \rangle_k) -
  q \cdot \qrank(\langle L_0 \rangle_c) = q^{-4} - q^2, \\
  &\qrank(\langle K_1 \rangle_k) -
  q^2 \cdot \qrank(\langle K_0 \rangle_c) = 0, &
  &\qrank(\langle K_0 \rangle_k) -
  q \cdot \qrank(\langle L_1 \rangle_c) = 0.
\end{align*}
It follows that the quantum dimensions of the homology groups for the
complex (\ref{periodic-flat}) for $J^\flat$ are
\begin{align*}
  &\qdim(H(L_0)) = q^{-5} + q, &
  &\qdim(H(L_1)) = q^{-4}, \\
  &\qdim(H(K_1)) = 1, &
  &\qdim(H(K_0)) = 0,
\end{align*}
since we have a generator of degree $0$ in $\langle K_1 \rangle_c$
that maps to $E$ times a generator of degree $-5$ in
$\langle L_0 \rangle_k$ and we have a generator of degree $1$ in
$\langle L_0 \rangle_c$ that maps $E$ times a generator of degree
$-4$ in $\langle L_1 \rangle_k$.
In particular, the homology is zero at $K_0$ and nonzero at $L_0$.

\section*{Acknowledgments}

The author would like to thank Mikhail Khovanov for reading an
earlier version of this manuscript and providing helpful comments.

\bibliographystyle{abbrv}
\bibliography{foam-counterexample}

\end{document}